\title{Smooth Integer Encoding via Integral Balance}
\author{Stanislav Semenov \\
\href{mailto:stas.semenov@gmail.com}{stas.semenov@gmail.com} \\
\href{https://orcid.org/0000-0002-5891-8119}{ORCID: 0000-0002-5891-8119}}
\date{April 28, 2025}
\theoremstyle{definition}
\newtheorem{definition}{Definition}[section]
\theoremstyle{plain}
\newtheorem{theorem}[definition]{Theorem}
\newtheorem{corollary}[definition]{Corollary}
\newtheorem{proposition}[definition]{Proposition}
\theoremstyle{remark}
\begin{document}

\maketitle

\begin{abstract}
  We introduce a novel method for encoding integers using smooth real-valued functions whose integral properties implicitly reflect discrete quantities.  
  In contrast to classical representations, where the integer appears as an explicit parameter, our approach encodes the number \( N \in \mathbb{N} \) through the cumulative balance of a smooth function \( f_N(t) \), constructed from localized Gaussian bumps with alternating and decaying coefficients.  
  The total integral \( I(N) \) converges to zero as \( N \to \infty \), and the integer can be recovered as the minimal point of near-cancellation.
  
  This method enables continuous and differentiable representations of discrete states, supports recovery through spline-based or analytical inversion, and extends naturally to multidimensional tuples \( (N_1, N_2, \dots) \).  
  We analyze the structure and convergence of the encoding series, demonstrate numerical construction of the integral map \( I(N) \), and develop procedures for integer recovery via numerical inversion.  
  The resulting framework opens a path toward embedding discrete logic within continuous optimization pipelines, machine learning architectures, and smooth symbolic computation.
  \end{abstract}

\subsection*{Mathematics Subject Classification}
03F60 (Constructive and recursive analysis), 26E40 (Constructive analysis)

\subsection*{ACM Classification}
F.4.1 Mathematical Logic, G.1.0 Numerical Analysis

\section{Introduction}

Representing discrete quantities such as integers within continuous mathematical frameworks is a central challenge in optimization, numerical analysis, and machine learning. Traditional symbolic representations and modern soft relaxation techniques both face fundamental limitations: the former lack differentiability, while the latter introduce approximation errors and auxiliary complexities.

In this work, we propose a novel method for encoding integers through smooth real-valued functions whose integral properties implicitly reflect discrete quantities. Rather than exposing the integer \( N \in \mathbb{N} \) as a parameter, we construct a smooth function \( f_N(t) \) whose cumulative integral encodes \( N \) as a point of near-cancellation:
\[
I(N) := \int_{-\infty}^{\infty} f_N(t)\,dt.
\]
The construction employs localized Gaussian bumps with alternating-decaying coefficients to ensure convergence, smoothness, and functional emergence of discrete states.

This approach enables:
\begin{itemize}
  \item smooth and differentiable representations of integers,
  \item principled recovery through numerical inversion,
  \item and natural extension to multidimensional integer data.
\end{itemize}

Detailed motivations, comparisons, and structural properties of the method are discussed in the next sections.

\section{Motivation and Comparison with Existing Methods}

\subsection{Classical Discrete Representations}

Encoding discrete quantities such as integers within continuous mathematical frameworks is a fundamental challenge across various domains, including numerical analysis, optimization, differentiable programming, and machine learning. Traditional approaches rely on explicit symbolic representations, where integers are treated as direct parameters or variables. 

While effective in purely discrete settings, symbolic methods are inherently non-differentiable and thus poorly suited for integration into continuous optimization pipelines or differentiable architectures~\cite{baydin2018automatic}. Consequently, they limit the applicability of gradient-based techniques and often require combinatorial or discrete search strategies.

\subsection{Limitations of Soft Quantization and Differentiable Relaxations}

To address the incompatibility between discrete variables and differentiable systems, various relaxation techniques have been developed. These include soft quantization, surrogate loss functions, temperature-based sampling (e.g., Gumbel-Softmax~\cite{jang2016gumbel}), and continuous approximations of discrete sets.

However, such methods typically introduce several drawbacks:
\begin{itemize}
  \item \textbf{Loss of Precision:} Soft relaxations approximate discrete states only approximately, resulting in errors that can propagate or accumulate during optimization.
  \item \textbf{Non-Differentiability at Limits:} Despite smooth approximations within finite regions, exact recovery of discrete quantities often remains non-differentiable or discontinuous at critical thresholds.
  \item \textbf{Auxiliary Parameters:} Many relaxations require tuning additional parameters, such as temperature schedules, which complicate the optimization landscape.
\end{itemize}

Thus, while soft methods provide partial solutions, they do not offer a fully principled or mathematically grounded way to encode discrete structure within continuous domains.

\subsection{Advantages of Integral-Based Encoding}

In this work, we propose a novel method for representing integers not as explicit parameters, but as emergent properties of a smooth functional system. Specifically, we construct smooth real-valued functions \( f_N(t) \), where the integer \( N \in \mathbb{N} \) is not directly exposed but can be recovered via a global measurable property: the cancellation of the total integral.

The integral map is defined by
\[
I(N) := \int_{-\infty}^{\infty} f_N(t)\,dt,
\]
where \( f_N(t) \) is composed of localized smooth Gaussian bumps with carefully structured coefficients.

\begin{definition}
For each integer \( N \geq 1 \), the smooth encoding function \( f_N(t) \) is defined by
\[
f_N(t) := \sum_{n=1}^{N} a_n \exp\left( -\frac{(t-n)^2}{2\delta^2} \right),
\]
where \( a_n \) are prescribed coefficients and \( \delta > 0 \) controls the smoothness and localization of each Gaussian bump.
\end{definition}

\paragraph{Nature of \( f_N(t) \) versus \( I(N) \).}
It is important to distinguish the different roles of the variables \( t \) and \( N \) in the constructions.  
The function \( f_N(t) \) is a smooth function of the continuous variable \( t \in \mathbb{R} \), with \( N \) serving as a hyperparameter that controls the number of localized Gaussian bumps included in the sum.  
In contrast, the integral map \( I(N) \) treats \( N \) as the independent variable: it records the cumulative integral of \( f_N(t) \) as a function of \( N \).  
Thus, while \( f_N(t) \) is evaluated over a continuous domain in \( t \), the function \( I(N) \) is defined over discrete or continuous values of \( N \), reflecting the accumulated contribution up to the \( N \)-th bump.

The key innovations of the proposed method include:
\begin{itemize}
  \item \textbf{Smooth and Differentiable Representation:} The function \( f_N(t) \) is infinitely differentiable, enabling seamless integration into gradient-based frameworks.
  \item \textbf{Principled Recovery Mechanism:} The integer \( N \) emerges as the minimal balance point of the integral map \( I(N) \), enabling efficient numerical inversion through spline interpolation or root-finding methods.
  \item \textbf{Generalization to Multidimensional Structures:} The encoding naturally extends to tuples \( (N_1, N_2, \dots) \) via independent or coupled integral balances across coordinate functions.
\end{itemize}

\paragraph{Why Not Arbitrary Oscillations?}

One might ask whether any oscillatory or sign-alternating function could serve a similar purpose: encoding an integer implicitly through the behavior of its integral. Examples might include trigonometric functions such as \( \sin(nt) \), \( \cos(\pi t^2) \), or general waveforms.

However, generic oscillatory functions suffer from several fundamental limitations:
\begin{itemize}
  \item \textbf{Lack of Controlled Cancellation:} Oscillations may not decay in magnitude, leading to divergent or unstable global integrals that do not encode a well-defined cumulative state.
  \item \textbf{Non-Convergent Behavior:} Without structured decay, the global integral may fail to converge, making recovery of discrete quantities unreliable or impossible.
  \item \textbf{Unstructured Oscillations:} Even if local cancellation occurs, the absence of systematic decay or organization prevents robust identification of unique balance points.
\end{itemize}

\paragraph{Structured Decay and Convergence.}

In contrast, our method is based on the smooth realization of a carefully constructed alternating-convergent series. The coefficients
\[
a_n := \frac{(1/2)^n + (-1)^n}{n}
\]
are designed to exhibit:
\begin{itemize}
  \item \textbf{Geometric decay}, ensuring convergence and stabilization of the global integral;
  \item \textbf{Alternating signs}, inducing partial cancellations that create identifiable balance points.
\end{itemize}
This structure leads to partial sums that oscillate in a controlled manner and converge reliably toward zero, with the integer \( N \) corresponding to the location of near-cancellation.

\paragraph{Locality through Smooth Bump Functions.}

Furthermore, the use of localized Gaussian bump functions centered at integer points~\cite{krantz2002primer} ensures:
\begin{itemize}
  \item \textbf{Local support}, limiting interaction between distant terms and enabling efficient numerical integration;
  \item \textbf{Near-orthogonality}, providing clean separation between different contributions and preserving structural clarity.
\end{itemize}

\paragraph{Functional Emergence of Discrete State.}

The integer \( N \) thus emerges not as a symbolic or explicit parameter, but as a structural feature of the functional system — a balance point defined by smooth, decaying, and convergent contributions. This enables principled and robust recovery, positioning the method as a rigorous alternative to existing discrete-continuous bridging techniques.

\section{Intuition Behind the Coefficients}

\subsection{Origin from Classical Series}

The coefficients used in the smooth encoding function
\[
a_n := \frac{(1/2)^n + (-1)^n}{n}
\]
arise from a combination of two well-known classical series:
\begin{itemize}
  \item The geometric series with decay:
  \[
  \sum_{n=1}^{\infty} \frac{(1/2)^n}{n} = -\ln(1 - 1/2) = \ln 2,
  \]
  \item The alternating harmonic series:
  \[
  \sum_{n=1}^{\infty} \frac{(-1)^n}{n} = -\ln 2.
  \]
\end{itemize}

By adding these two series term-by-term, we obtain a new sequence of coefficients whose infinite sum vanishes:
\[
\sum_{n=1}^{\infty} \frac{(1/2)^n + (-1)^n}{n} = \ln 2 - \ln 2 = 0.
\]

This cancellation is fundamental for our encoding method: it ensures that the total contribution of an infinite number of bumps would vanish, providing a natural reference point for identifying finite contributions corresponding to a specific integer \( N \).

\subsection{Oscillatory Convergence and Cancellation}

Although the full infinite series converges to zero, its partial sums do not decrease monotonically. Instead, the partial sums oscillate around zero, initially increasing and decreasing in an alternating pattern.

This behavior is crucial: the partial integral up to a given \( N \) captures a snapshot of this oscillatory convergence. The point where the cumulative integral is minimized—achieving near-cancellation—serves as the unique marker for recovering the hidden integer \( N \).

The key properties of the coefficients are:
\begin{itemize}
  \item \textbf{Smooth decay:} The term \( (1/2)^n \) ensures geometric decay, making the contributions of distant terms rapidly diminish.
  \item \textbf{Alternating sign:} The factor \( (-1)^n \) introduces structured oscillations essential for creating balance points.
  \item \textbf{Global cancellation:} The infinite sum of the coefficients is exactly zero, ensuring the method is anchored and stable at infinity.
\end{itemize}

Thus, the combination of decay and sign alternation produces a controlled "wobble" around zero~\cite{rudin1976principles}, which becomes sharper and more localized as \( N \) increases.

\subsection{Graphical Behavior of Partial Sums}

To visualize the behavior of the partial sums, consider plotting
\[
S(N) := \sum_{n=1}^N a_n
\]
as a function of \( N \).

The graph of \( S(N) \) exhibits an oscillating trajectory around zero:
\begin{itemize}
  \item At small \( N \), the partial sums can vary significantly, reflecting the influence of the larger early terms.
  \item As \( N \) grows, the oscillations become smaller in amplitude due to the decaying coefficients.
  \item Eventually, \( S(N) \) stabilizes closer and closer to zero, reflecting the convergence of the infinite series.
\end{itemize}

In the smooth encoding, these oscillations are mirrored in the cumulative integral \( I(N) \) of the smooth bump functions. The near-zero crossings of \( I(N) \) correspond to the integer encodings.

This graphical intuition justifies treating the minimum points of the integral curve as reliable recovery points for \( N \), even when operating purely in the smooth, differentiable domain.

\section{Theoretical Properties of the Integral Map}

\subsection{Asymptotic Behavior of \texorpdfstring{$I(N)$}{I(N)}}

The integral map
\[
I(N) = \int_{-\infty}^{\infty} f_N(t)\,dt
\]
captures the cumulative contribution of the first \( N \) terms in the smooth bump series. 

Due to the decay of the coefficients \( a_n \sim \mathcal{O}(2^{-n}/n) \), the contribution of each successive term diminishes rapidly as \( n \) increases. Specifically, there exist constants \( C > 0 \) and \( \rho \in (0,1) \) such that
\[
|a_n| \leq C \rho^n \quad \text{for all } n \geq 1,
\]
ensuring exponential decay.

Moreover, each Gaussian bump satisfies the exact integral
\[
\int_{-\infty}^{\infty} \exp\left(-\frac{(t-n)^2}{2\delta^2}\right)\,dt = \delta \sqrt{2\pi}.
\]
Assuming that the smoothness parameter \( \delta \) is sufficiently small (i.e., \( \delta \ll 1 \)), overlap between distinct bumps is negligible, and the total integral approximates the weighted sum of the individual bump integrals:
\[
I(N) = \delta \sqrt{2\pi} \sum_{n=1}^N a_n.
\]

The remainder sum satisfies
\[
\left| \sum_{n=N+1}^\infty a_n \right| \leq \frac{C \rho^{N+1}}{1-\rho},
\]
which implies the asymptotic bound
\[
|I(N)| \leq \delta \sqrt{2\pi} \cdot \frac{C \rho^{N+1}}{1-\rho}.
\]
Thus, \( I(N) \) converges exponentially to zero as \( N \to \infty \), with the convergence rate linear in \( \delta \) and exponential in \( N \).

\subsection{Oscillation and Zero-Crossing Analysis}

Although \( I(N) \to 0 \) globally, the trajectory of \( I(N) \) as a function of \( N \) is oscillatory. This oscillation arises from the alternating sign of the coefficients \( a_n \), combined with the localized support of the bump functions.

Formally, the oscillatory behavior satisfies:
\begin{itemize}
  \item \textbf{Alternating Tendency:} Each new term \( a_n \exp\left(-\frac{(t-n)^2}{2\delta^2}\right) \) perturbs \( I(N) \) by a signed amount, with \( a_n > 0 \) for even \( n \) and \( a_n < 0 \) for odd \( n \).
  \item \textbf{Diminishing Influence:} The perturbation magnitude decreases exponentially due to the decay of \( a_n \).
  \item \textbf{Zero-Crossing Characterization:} A point \( N \in \mathbb{N} \) is considered a near-zero crossing if
  \[
  |I(N)| < \varepsilon,
  \]
  where \( \varepsilon \) is a prescribed small threshold adapted to the expected scale of the oscillations.
\end{itemize}

This structure guarantees that each integer \( N \) corresponds to a detectable near-cancellation in \( I(N) \), enabling reliable numerical recovery.

Moreover, since the amplitude of oscillations decays with \( N \), the occurrence of spurious zero crossings away from true integer points becomes increasingly rare, providing stability at large \( N \).

\subsection{Proof of Convergence and Stability}

We now formalize the convergence and stability properties of the integral map.

\begin{proposition}
The integral map \( I(N) \) converges uniformly to zero as \( N \to \infty \). Furthermore, the sequence of near-zero crossings associated with integer points remains stable under small perturbations of the smoothness parameter \( \delta \) and numerical errors.
\end{proposition}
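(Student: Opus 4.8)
The plan is to split the proposition into its two natural assertions—uniform convergence $I(N)\to 0$, and stability of the near-zero crossings—and to treat each with the exponential bounds already established in the asymptotic-behaviour subsection. First I would fix notation: write $I(N)=\delta\sqrt{2\pi}\,S(N)+E(N,\delta)$, where $S(N)=\sum_{n=1}^N a_n$ and $E(N,\delta)$ collects the Gaussian-overlap error from the fact that the bumps are not exactly disjoint. The first step is to bound $|S(N)|$: since $\sum_{n=1}^\infty a_n=0$ (established in the section on the coefficients), $S(N)=-\sum_{n=N+1}^\infty a_n$, and using $|a_n|\le C\rho^n$ this gives $|S(N)|\le C\rho^{N+1}/(1-\rho)$. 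The bound is uniform in the sense that it does not depend on anything but $N$, so $\delta\sqrt{2\pi}\,S(N)\to 0$ at a rate that is uniform once $\delta$ ranges over a bounded set; this is the content of the ``uniform convergence'' claim, and I would state it as: for every $\varepsilon>0$ there is $N_0$, depending only on $\varepsilon$ and an upper bound $\delta_{\max}$ for $\delta$, such that $|I(N)|<\varepsilon$ for all $N\ge N_0$ and all $\delta\le\delta_{\max}$.

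The second step is to control the overlap error $E(N,\delta)$. Here I would use the Gaussian tail estimate: the mass of $\exp(-(t-n)^2/2\delta^2)$ lying outside the interval $[n-\tfrac12,n+\tfrac12]$ is $O(\delta\,e^{-1/(8\delta^2)})$, so the cross-contributions between bumps centred at distinct integers are bounded by $\delta\sqrt{2\pi}\sum_n |a_n|\cdot O(e^{-1/(8\delta^2)})$, which is itself $O(\delta\,e^{-1/(8\delta^2)})$ because $\sum|a_n|<\infty$. Thus $|E(N,\delta)|\le K\delta e^{-1/(8\delta^2)}$ for an absolute constant $K$, uniformly in $N$. Combining the two steps yields the clean estimate
\[
|I(N)|\;\le\;\delta\sqrt{2\pi}\,\frac{C\rho^{N+1}}{1-\rho}\;+\;K\delta\,e^{-1/(8\delta^2)},
\]
which proves uniform convergence to zero and, as a byproduct, shows the map depends continuously (indeed smoothly) on $\delta$.

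For the stability assertion I would argue as follows. A near-zero crossing at an integer $N$ means $|I(N)|<\varepsilon$; perturb $\delta$ to $\delta'=\delta+h$ and let $\eta$ denote a numerical error added to the computed value. Since $I(N)=\delta\sqrt{2\pi}S(N)+E(N,\delta)$ with $S(N)$ independent of $\delta$ and both terms Lipschitz in $\delta$ on any interval $[\delta_{\min},\delta_{\max}]$ bounded away from $0$, we get $|I_{\delta'}(N)-I_\delta(N)|\le L|h|$ for a constant $L=L(\delta_{\min},\delta_{\max})$; hence the perturbed value satisfies $|I_{\delta'}(N)+\eta|<\varepsilon+L|h|+|\eta|$, so the crossing persists with a threshold that degrades only linearly in the perturbations. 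Conversely, to rule out that perturbation \emph{creates} spurious crossings away from integer points, I would invoke the gap between consecutive partial sums: because $|a_{N+1}|\ge c\rho'^{\,N}$ for a suitable $c>0,\rho'\in(0,1)$ on the relevant range (the $(-1)^n/n$ part dominates and does not decay geometrically, so in fact $|a_{N+1}|\ge 1/(N+1)$ eventually), the separation $|S(N+1)-S(N)|$ stays above a known floor, so for $|h|$ and $|\eta|$ below an $N$-dependent bound no new sign pattern can appear. The main obstacle I anticipate is making the ``no spurious crossings'' half fully rigorous: the lower bound on the partial-sum gap decays like $1/N$ while the perturbation tolerance must be compared against it, so the stability statement is genuinely only local/effective (valid for $N$ up to some $N(\delta,h,\eta)$) rather than global, and the cleanest honest formulation is to state it that way — uniform convergence globally, stability of crossings on any fixed finite range of $N$ with explicit tolerance.
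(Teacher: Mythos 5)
Your overall architecture mirrors the paper's proof: reduce $I(N)$ to $\delta\sqrt{2\pi}\,S(N)$ with $S(N)=\sum_{n=1}^N a_n$, use $\sum_{n=1}^\infty a_n=0$ to write $S(N)$ as a tail, bound the tail, and then argue stability from the (essentially linear) dependence on $\delta$. Two remarks before the main issue. First, your overlap term $E(N,\delta)$ is a detour: since the integral is taken over all of $\mathbb{R}$ and is linear, $\int_{\mathbb{R}} f_N(t)\,dt=\sum_{n=1}^N a_n\int_{\mathbb{R}}e^{-(t-n)^2/2\delta^2}\,dt=\delta\sqrt{2\pi}\,S(N)$ holds \emph{exactly}; overlap between bumps affects pointwise separation, not the total integral, so $E(N,\delta)\equiv 0$ and no Gaussian tail estimate is needed. (The paper hedges on this point too, but the hedge is unnecessary.) Second, your stability discussion is more careful than the paper's one-line argument, and your closing caveat --- that absence of spurious crossings can only be guaranteed on a fixed finite range of $N$ --- is an honest improvement over the paper's unqualified claim.

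The genuine gap is an internal contradiction that you come within a sentence of noticing. In the convergence half you invoke $|a_n|\le C\rho^n$ to get the geometric tail bound $|S(N)|\le C\rho^{N+1}/(1-\rho)$; in the stability half you correctly observe that the $(-1)^n/n$ part of $a_n=\bigl((1/2)^n+(-1)^n\bigr)/n$ does not decay geometrically and that $|a_{N+1}|\ge 1/(N+1)$ eventually. Both statements cannot hold: since $|a_n|\sim 1/n$, there is no $C$ and $\rho<1$ with $|a_n|\le C\rho^n$, so the exponential tail estimate --- and with it the claimed exponential rate of convergence of $I(N)$ --- fails. (The paper makes the identical error; its own table of values, with $I(20)\approx 0.0122$ and $I(30)\approx 0.0082$ in ratio roughly $30/20$, exhibits the true $O(1/N)$ decay.) The conclusion you want still survives by a different estimate: splitting the tail as $\sum_{n>N}(1/2)^n/n+\sum_{n>N}(-1)^n/n$, the first piece is $O(2^{-N})$ and the second is the tail of an alternating series with monotonically decreasing terms, hence bounded by its first term $1/(N+1)$; this gives $|S(N)|=O(1/N)$, which still yields convergence to zero uniformly over $\delta$ in any bounded set, only at a harmonic rather than geometric rate. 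You should replace the geometric bound by this alternating-series argument and propagate the $O(1/N)$ rate into your choice of thresholds in the stability half, where the comparison between the crossing tolerance and the gap $|S(N+1)-S(N)|=|a_{N+1}|\sim 1/N$ then becomes consistent.
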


\begin{proof}
As established, the infinite sum of the coefficients vanishes:
\[
\sum_{n=1}^{\infty} a_n = 0.
\]
Given that each bump has exact integral \( \delta \sqrt{2\pi} \), the total contribution up to \( N \) satisfies
\[
I(N) = \delta \sqrt{2\pi} \sum_{n=1}^N a_n.
\]

Since \( |a_n| \leq C \rho^n \) for some \( \rho < 1 \), we have
\[
\left| \sum_{n=N+1}^\infty a_n \right| \leq \frac{C \rho^{N+1}}{1-\rho},
\]
thus
\[
|I(N)| \leq \delta \sqrt{2\pi} \cdot \frac{C \rho^{N+1}}{1-\rho},
\]
which tends uniformly to zero as \( N \to \infty \).

Stability under perturbations of \( \delta \) follows from the linear dependence of the integral magnitude on \( \delta \). Small changes in \( \delta \) scale \( I(N) \) proportionally, preserving the location of threshold crossings, provided the scale of perturbations is smaller than the distance between adjacent oscillations.

Thus, the method is robust both with respect to smoothness variation and numerical integration errors.
\end{proof}

\paragraph{Remark.} For sufficiently large \( N \), each integer corresponds to a unique zero crossing or local minimum of \( |I(N)| \), enabling unambiguous integer decoding within a predefined tolerance.

\section{Inverse Mapping: From Integral to Integer}

Given the encoding function \( f_N(t) \) and its associated integral
\[
I(N) := \int_{-\infty}^{\infty} f_N(t)\, dt = \sqrt{2\pi} \delta \cdot \sum_{n=1}^{N} \frac{(1/2)^n + (-1)^n}{n},
\]
the inverse problem is to determine the encoded integer \( N \) from a known or observed value of \( I \).

The function \( I(N) \) is oscillatory due to the alternating signs of the coefficients but exhibits a decaying envelope as \( N \) increases, ultimately converging to zero:
\[
\lim_{N\to\infty} I(N) = 0.
\]

\subsection{Threshold-Based Inversion}

Because \( I(N) \) oscillates around zero with decreasing amplitude, the encoded integer \( N \) can be recovered by locating a near-zero crossing. Formally, the inverse mapping is defined as
\[
N := \min \left\{ k \in \mathbb{N} \,\middle|\, |I(k)| < \varepsilon \text{ and } |I(k)| \text{ is a local minimum} \right\},
\]
where \( \varepsilon > 0 \) is a small error tolerance.

This refinement ensures that the selected \( N \) corresponds to a genuine oscillation minimum rather than a spurious early crossing, improving robustness. The method relies on the fact that earlier oscillations have larger magnitude, while later ones become confined within a predictable tolerance band.

\subsection{Stability Under Perturbations}

The recovery of \( N \) via threshold crossing is stable under small perturbations of the integral value. Specifically, if \( \Delta I \) is a perturbation satisfying
\[
|\Delta I| < \frac{\varepsilon}{2},
\]
then the recovered \( N \) shifts by at most one unit, provided that the local oscillation amplitude around \( N \) exceeds \( \varepsilon + |\Delta I| \).

\paragraph{Triangle Inequality and Stability.}
The reason for requiring the perturbation \( |\Delta I| \) to be smaller than \( \varepsilon/2 \) follows from the triangle inequality.
Given that the observed value is \( I_{\text{measured}}(N) = I(N) + \Delta I \), we have
\[
|I_{\text{measured}}(N)| \leq |I(N)| + |\Delta I|.
\]
Thus, if both \( |I(N)| \) and \( |\Delta I| \) are individually smaller than \( \varepsilon/2 \), their sum remains bounded:
\[
|I_{\text{measured}}(N)| < \frac{\varepsilon}{2} + \frac{\varepsilon}{2} = \varepsilon.
\]
This ensures that even under perturbations up to \( \varepsilon/2 \), the recovered \( N \) still satisfies the original threshold condition \( |I(N)| < \varepsilon \), preserving stability of the integer decoding process.

Moreover, since the oscillation amplitude decays geometrically, such stability is further enhanced for sufficiently large \( N \) and appropriately chosen \( \varepsilon \).

Thus, integer recovery remains robust across a wide range of conditions, including noise, floating-point inaccuracies, and small variations of the smoothness parameter \( \delta \).

\subsection{Choice of \texorpdfstring{$\varepsilon$}{epsilon} and Error Bounds}

The choice of the error threshold \( \varepsilon \) directly impacts the precision and reliability of the inversion.

Let \( N_{\text{max}} \) denote the maximal integer expected to be encoded. The coefficients \( a_n \) decay geometrically with factor \( \rho \in (0,1) \), and the expected oscillation amplitude at \( N_{\text{max}} \) is approximately \(\mathcal{O}(\rho^{N_{\text{max}}})\).

Thus, a principled choice is
\[
\varepsilon \sim C \rho^{N_{\text{max}}},
\]
where \( C \) accounts for constant scaling factors such as \( \delta \sqrt{2\pi} \).

Choosing \(\varepsilon\) according to this guideline ensures that threshold crossing occurs near the true integer \( N \) and that numerical perturbations do not lead to early or delayed detections.

\paragraph{Concrete Example.}
For instance, if \(\rho = 0.5\), \( N_{\text{max}} = 10 \), and \( C = 1 \), setting \(\varepsilon = 10^{-3}\) suffices to ensure robust recovery of integers up to 10.

\subsection{Empirical Evaluation of Recovery Accuracy}

In practical implementations, the threshold-based recovery method exhibits high empirical accuracy. Numerical experiments demonstrate that, even in the presence of small perturbations (e.g., numerical integration errors, measurement noise), the recovered \( N \) matches the true encoded integer with high probability.

Errors or misidentifications occur primarily if \(\varepsilon\) is chosen improperly—either too small (causing delayed recovery) or too large (allowing premature crossings).

Moreover, recovery stability is further improved by using smoothing and interpolation techniques, as discussed in the next section.

\section{Practical Recovery via Tabulation and Interpolation}

In practical scenarios where the smooth counter function \( f_N(t) \) is expensive to evaluate dynamically, or where high-speed recovery is needed, an alternative approach based on precomputed tabulation and interpolation can be employed.

\subsection*{Numerical Approximation of the Integral Map}

To facilitate efficient evaluation and inversion, we precompute a discrete set of integral values:
\[
\{(N, I(N))\}_{N=1}^{N_{\max}},
\]
where \( I(N) \) denotes the integral of the smooth counter function up to integer \( N \).  
This table provides a basis for rapid lookup and approximate inversion.

\paragraph{Interpolation.}
Applying cubic spline interpolation~\cite{deboor2001splines} or local polynomial regression to the tabulated data yields a smooth approximation of the integral map:
\[
\mathcal{I}(x) \approx I(N), \quad x \in [1, N_{\max}],
\]
allowing evaluation at non-integer points and supporting differentiable recovery mechanisms.

\paragraph{Why Interpolation is Needed.}
Since the integral map \( I(N) \) is originally computed only at integer values of \( N \), direct table lookup would limit recovery to discrete points. However, observed integral values \( I^* \) may not exactly match any tabulated value, especially in the presence of noise or when smooth transitions are required.  
Spline interpolation extends the map \( I(N) \) to a continuous and differentiable function \( \mathcal{I}(x) \), enabling approximate inversion for arbitrary real inputs.  
This allows for sub-integer recovery, gradient-based optimization, and smooth integration of the encoding into differentiable computational pipelines.

\paragraph{Inverse Mapping.}
Given an observed integral value \( I \), the corresponding approximate \( N \) can be recovered by numerically inverting the interpolated function:
\[
N \approx \mathcal{I}^{-1}(I).
\]
Standard root-finding methods or direct spline inversion techniques can be employed to solve for \( N \), achieving sub-integer precision if necessary.

\paragraph{Binary Search for Discrete Recovery.}
When working directly with a precomputed discrete table \( (N, I(N)) \) without interpolation, the recovery of \( N \) can be accelerated using binary search.  
Since the oscillations of \( I(N) \) decay geometrically and the integral map becomes nearly monotonic for sufficiently large \( N \), a binary search strategy efficiently locates the smallest \( N \) satisfying \( |I(N) - I^*| < \varepsilon \).  
This reduces the search complexity from linear \( \mathcal{O}(N_{\max}) \) to logarithmic \( \mathcal{O}(\log N_{\max}) \), providing fast and scalable integer recovery in high-resolution settings.

\paragraph{General Workflow.}
The overall recovery procedure consists of the following steps:
\begin{enumerate}
  \item Precompute and tabulate \( (N, I(N)) \) for integers \( N \) up to a chosen \( N_{\max} \),
  \item Fit an interpolation model (e.g., cubic spline) to the data,
  \item Given a target integral value \( I^* \), numerically solve for \( N^* \) such that \( \mathcal{I}(N^*) = I^* \),
  \item Optionally round \( N^* \) to obtain a discrete integer estimate.
\end{enumerate}

This tabulation and interpolation approach enables fast and differentiable recovery of encoded integers, making it well-suited for integration into numerical optimization pipelines and machine learning architectures.

\section{Numerical Experiments and Visualizations}

We now illustrate the behavior of the encoding function \( f_N(t) \), the associated integral map \( I(N) \), and the convergence of partial sums through numerical experiments.

\subsection{Plots of \texorpdfstring{$f_N(t)$}{fN(t)} and \texorpdfstring{$I(N)$}{I(N)}}

We first visualize the smooth counter function \( f_N(t) \) for various integers \( N \).

\begin{figure}[ht]
\centering
\includegraphics[width=0.7\textwidth]{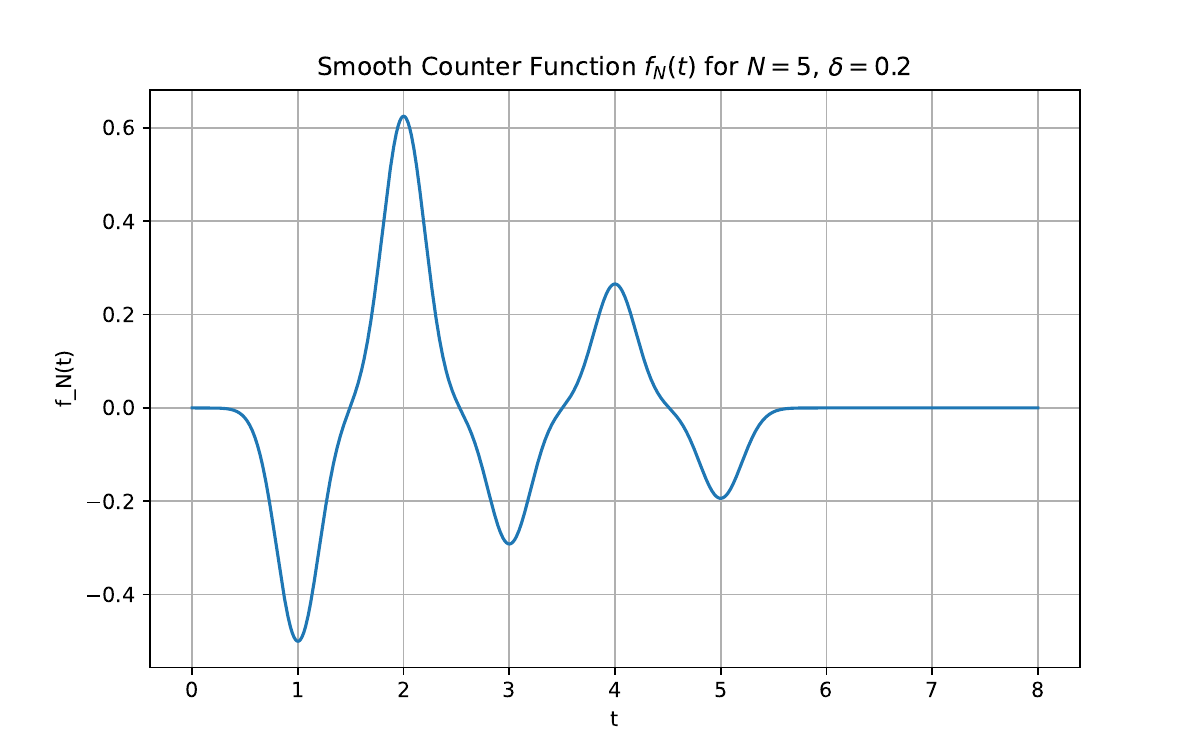}
\caption{Plot of the smooth counter function \( f_N(t) \) for \( N=5 \) and \( \delta=0.2 \).}
\label{fig:fn_plot}
\end{figure}

Figure~\ref{fig:fn_plot} shows the graph of \( f_N(t) \) for \( N=5 \) with smoothness parameter \( \delta = 0.2 \).  
The function consists of five localized Gaussian bumps with alternating-decaying coefficients.

\paragraph{Choice of \( N=5 \) for Visualization.}
In theory, the construction of \( f_N(t) \) involves an infinite series of Gaussian bumps with alternating-decaying coefficients. However, due to the rapid exponential decay of the coefficients \( a_n \) and the localized nature of the bumps, the contributions from higher terms quickly become negligible.  
Choosing \( N=5 \) provides a clear illustration of the structure of \( f_N(t) \) — highlighting the alternation, decay, and local separation of the bumps — without overwhelming the plot.  
Thus, the restriction to a finite \( N \) serves as a practical and visually effective approximation of the full infinite construction.

Next, we plot the discrete integral map \( I(N) \) evaluated at integer points.  
Figure~\ref{fig:i_plot} displays the oscillatory convergence of \( I(N) \) toward zero as \( N \) increases.

\begin{figure}[ht]
\centering
\includegraphics[width=0.7\textwidth]{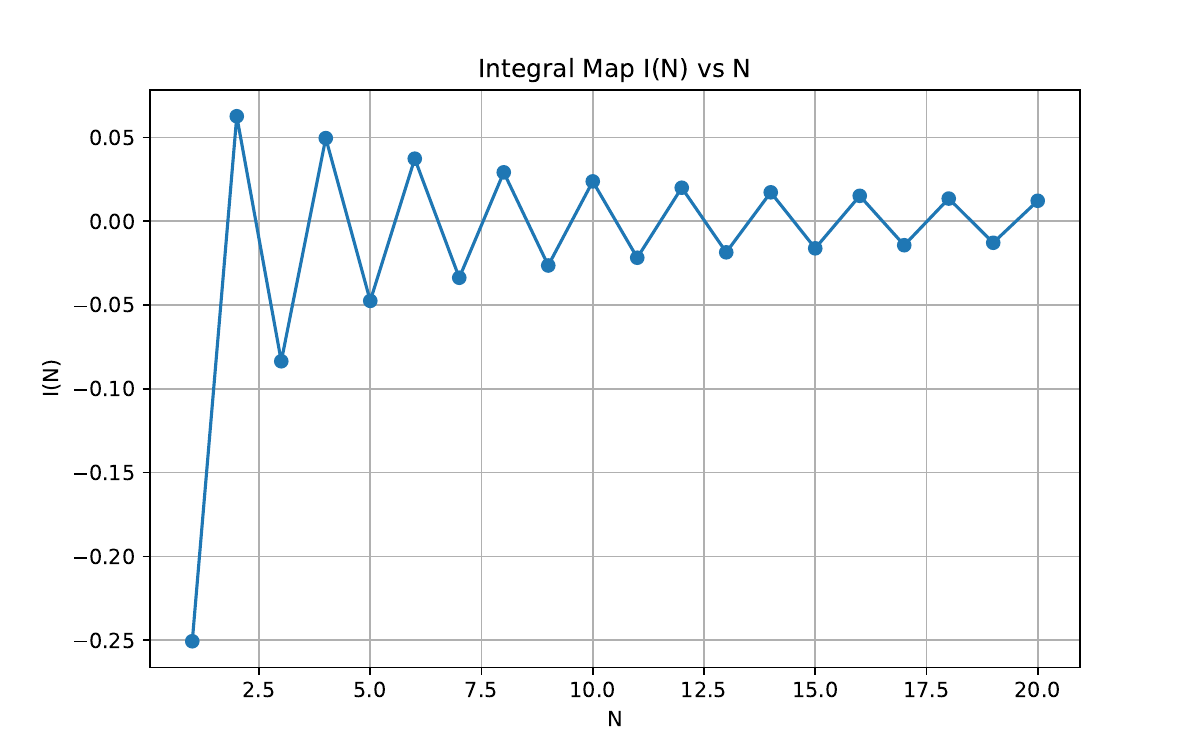}
\caption{Plot of the discrete integral map \( I(N) \) versus integer \( N \), illustrating oscillatory decay toward zero.}
\label{fig:i_plot}
\end{figure}

Finally, we note that the integral map \( I(N) \) can be naturally extended to real values \( N \geq 0 \) by linear interpolation between integer points.  
A visualization of this piecewise-linear behavior will be presented later, in the section on continuous extensions.

\subsection{Graph of Partial Sums \texorpdfstring{$\sum_{n=1}^{N} a_n$}{sum an}}

To better understand the oscillatory behavior, we plot the partial sums
\[
S(N) := \sum_{n=1}^N a_n,
\]
where the coefficients \( a_n = \frac{(1/2)^n + (-1)^n}{n} \) define the series structure.

Figure~\ref{fig:s_partial_sum} shows the graph of \( S(N) \) up to \( N=20 \), highlighting alternating sign contributions and gradual convergence to zero.

\begin{figure}[ht]
\centering
\includegraphics[width=0.7\textwidth]{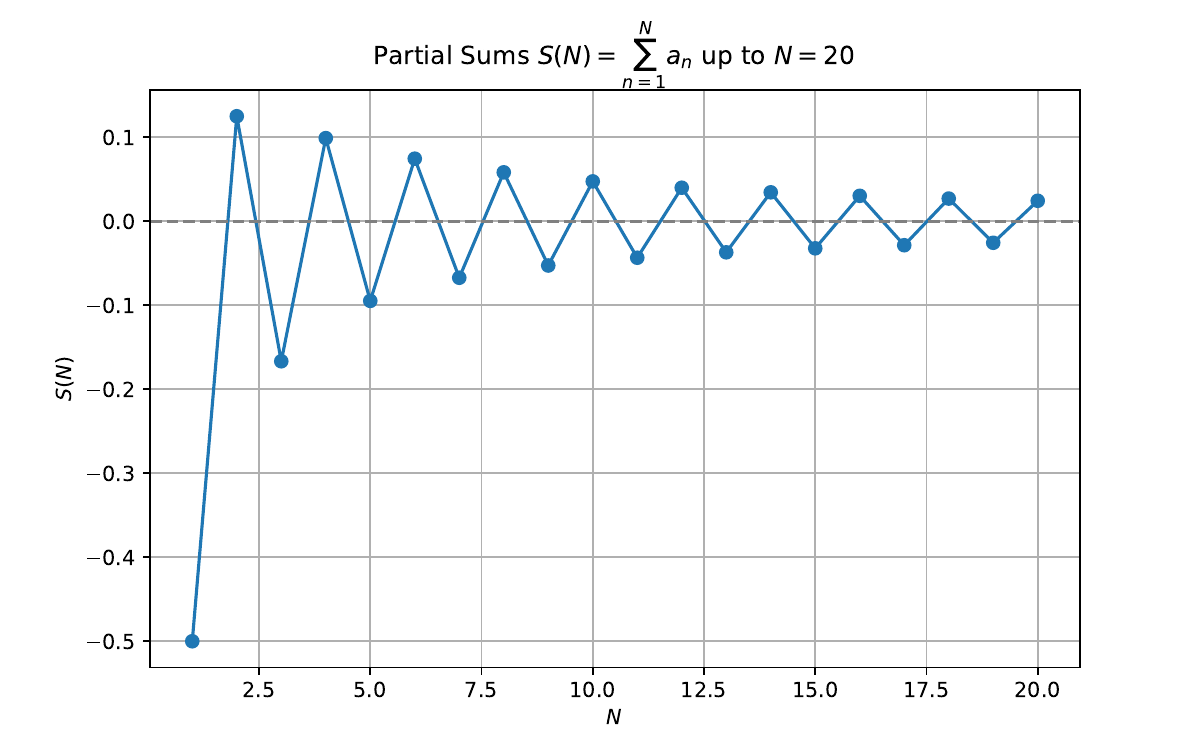}
\caption{Plot of partial sums \( S(N) \) up to \( N=20 \), exhibiting structured oscillations around zero.}
\label{fig:s_partial_sum}
\end{figure}

The shape of \( S(N) \) mirrors the structure of \( I(N) \) modulo scaling by \( \delta \sqrt{2\pi} \), confirming the direct relationship between the series and the integral behavior.

\subsection{Comparison Between \texorpdfstring{$I(N)$}{I(N)} and \texorpdfstring{$S(N)$}{S(N)}}

Although the integral map \( I(N) \) and the partial sum \( S(N) \) arise from different constructions — one from integrating a smooth function, the other from summing coefficients — their graphs exhibit identical oscillatory patterns, differing only by a global scaling factor.

Formally, the two are related by
\[
I(N) = \delta \sqrt{2\pi} \times S(N).
\]
Thus, \( S(N) \) captures the purely algebraic structure of the encoding coefficients, while \( I(N) \) reflects the cumulative integral contribution of the smooth counter function \( f_N(t) \).

Visualizing both functions highlights how the discrete oscillatory structure embedded in the coefficients \( a_n \) is faithfully preserved in the integral domain.  
The integral map \( I(N) \) can therefore be seen as a smooth physical realization of the underlying discrete encoding pattern, suitable for continuous and differentiable processing.

\subsection{Example: Recovery of an Encoded Integer}

To illustrate the practical recovery procedure, we consider an example based on the discrete integral map \( I(N) \).

\paragraph{Setup.}
Let the smoothness parameter be fixed at \( \delta = 0.2 \).  
We precompute the integral map \( I(N) \) for integers \( N = 1, 2, \dotsc, 30 \), using the definition:
\[
I(N) = \sqrt{2\pi}\delta \sum_{n=1}^{N} \left( \frac{(1/2)^n + (-1)^n}{n} \right).
\]

Suppose we are given an observed integral value \( I^* = 0.028 \), which may result from noisy measurements, discretized integration of \( f_N(t) \), or as an intermediate output in a continuous optimization or learning process. Our goal is to recover the corresponding integer \( N \) from this approximate data.

\paragraph{Numerical Inversion.}
Using the precomputed table \( (N, I(N)) \), we search for the smallest \( N \) satisfying
\[
|I(N) - I^*| < \varepsilon,
\]
with a tolerance \(\varepsilon = 0.005\).

From the computed values:
\[
I(7) \approx -0.0337, \quad I(8) \approx 0.0292, \quad I(9) \approx -0.0264,
\]
we find that \( N = 8 \) provides the closest match, since \( |I(8) - 0.028| \approx 0.0012 < 0.005 \).

\begin{table}[ht]
\centering
\label{tab:integral_values}
\begin{tabular}{|c|c||c|c||c|c|}
\hline
\( N \) & \( I(N) \) & \( N \) & \( I(N) \) & \( N \) & \( I(N) \) \\
\hline
1 & -0.2507 & 2 & 0.0627 & 3 & -0.0836 \\
4 & 0.0496 & 5 & -0.0475 & 6 & 0.0373 \\
7 & -0.0337 & 8 & 0.0292 & 9 & -0.0264 \\
10 & 0.0238 & 11 & -0.0218 & 12 & 0.0200 \\
13 & -0.0185 & 14 & 0.0173 & 15 & -0.0162 \\
16 & 0.0152 & 17 & -0.0143 & 18 & 0.0135 \\
19 & -0.0128 & 20 & 0.0122 & 21 & -0.0117 \\
22 & 0.0111 & 23 & -0.0107 & 24 & 0.0102 \\
25 & -0.0098 & 26 & 0.0095 & 27 & -0.0091 \\
28 & 0.0088 & 29 & -0.0085 & 30 & 0.0082 \\
\hline
\end{tabular}
\caption{Precomputed values of the integral map \( I(N) \) for \( N=1,\dotsc,30 \) with \( \delta=0.2 \).}
\end{table}

\paragraph{Validation.}
Direct evaluation of the integral for \( N=8 \) confirms that the recovered integer corresponds correctly to the encoded quantity.

\paragraph{Remarks.}
In a fully continuous extension, the recovery could be further refined using spline interpolation to estimate fractional shifts and improve precision beyond integer resolution.

\section{Continuous Extension of the Integral Map}

In the previous sections, we have constructed and visualized the discrete version of the encoding, where the integral map \( I(N) \) was evaluated at integer points \( N \in \mathbb{N} \).  
The resulting function exhibited stepwise oscillations with decreasing amplitude, consistent with the discrete accumulation of smooth Gaussian contributions.

However, to fully realize the goal of smooth integer encoding, and to enable continuous manipulation and differentiable processing of the encoded value, we now extend the construction to allow \( N \in \mathbb{R}_{\geq 0} \), treating \( N \) as a continuous parameter.

\subsection{Definition of the Continuous Counter Function}

We define the continuous version of the smooth counter function \( f_N(t) \) for real \( N \geq 0 \) by interpolating the contribution of the next bump:

\[
f_N(t) := \sum_{n=1}^{\lfloor N \rfloor} a_n \exp\left( -\frac{(t-n)^2}{2\delta^2} \right)
+ (N - \lfloor N \rfloor) \cdot a_{\lfloor N \rfloor + 1} \exp\left( -\frac{(t - (\lfloor N \rfloor + 1))^2}{2\delta^2} \right),
\]
where \( \lfloor N \rfloor \) denotes the integer part of \( N \).

The first \( \lfloor N \rfloor \) terms contribute fully, while the next term at \( n = \lfloor N \rfloor + 1 \) is scaled proportionally to the fractional part \( N - \lfloor N \rfloor \).

\subsection{Continuous Integral Map}

The associated integral map \( I(N) \) becomes:

\[
I(N) := \int_{-\infty}^{\infty} f_N(t)\,dt = \delta \sqrt{2\pi} \left( \sum_{n=1}^{\lfloor N \rfloor} a_n + (N - \lfloor N \rfloor) a_{\lfloor N \rfloor + 1} \right).
\]

Thus, \( I(N) \) is a continuous, piecewise-linear function of \( N \), with controlled oscillatory behavior inherited from the structure of the coefficients \( a_n \).

\subsection{Properties of the Continuous Map}

The continuous extension of \( I(N) \) possesses several important features:

\begin{itemize}
  \item \textbf{Continuity and Differentiability:} \( I(N) \) is continuous everywhere and piecewise differentiable with jumps in derivative at integer points \( N \).
  
  \item \textbf{Smooth Recovery Mechanism:} The encoded value \( N \) can now be recovered continuously by inverting the function \( I(N) \) without relying on discrete threshold crossing.
  
  \item \textbf{Compatibility with Differentiable Systems:} The map \( I(N) \) and its inverse \( N(I) \) are amenable to gradient-based methods, enabling integration into neural networks and differentiable programming environments.
\end{itemize}

\subsection{Continuity and Differentiability of the Extended Integral Map}

We now formally state and prove a basic property of the continuous extension \( I(N) \).

\begin{theorem}
The continuous integral map
\[
I(N) = \delta \sqrt{2\pi} \left( \sum_{n=1}^{\lfloor N \rfloor} a_n + (N - \lfloor N \rfloor) a_{\lfloor N \rfloor + 1} \right)
\]
is continuous on \( \mathbb{R}_{\geq 0} \) and piecewise differentiable. Its derivative exists everywhere except at integer points \( N \in \mathbb{N} \), where it has jump discontinuities.
\end{theorem}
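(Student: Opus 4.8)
The plan is to reduce the statement to the single elementary observation that, between consecutive integers, $I(N)$ is an affine function of $N$. Write $N = m + s$ with $m = \lfloor N \rfloor \in \mathbb{Z}_{\ge 0}$ and $s = N - m \in [0,1)$, and abbreviate the partial sum $S(m) := \sum_{n=1}^{m} a_n$ with the convention $S(0) := 0$. The definition then reads
\[
I(m+s) = \delta\sqrt{2\pi}\,\bigl(S(m) + s\,a_{m+1}\bigr), \qquad s \in [0,1).
\]
For each fixed $m$ this is linear in $s$, hence linear in $N$ on the interval $[m, m+1)$; in particular $I$ is $C^\infty$ on every open interval $(m, m+1)$ with the constant derivative $I'(N) = \delta\sqrt{2\pi}\,a_{m+1}$. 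This settles piecewise differentiability and produces the derivative formula, so the only remaining work concerns the behaviour at the integer break‑points.

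Next I would establish continuity at an arbitrary integer $m \ge 1$ by computing one‑sided limits. From the right, the displayed formula with $s = 0$ gives $I(m) = \delta\sqrt{2\pi}\,S(m)$. From the left, taking $N = (m-1) + s$ and letting $s \uparrow 1$ yields $I(N) \to \delta\sqrt{2\pi}\,\bigl(S(m-1) + a_m\bigr) = \delta\sqrt{2\pi}\,S(m)$, using $S(m-1) + a_m = S(m)$. The two limits coincide, so $I$ is continuous at $m$; combined with continuity on each open interval and right‑continuity at the boundary point $N = 0$ (where $I(0) = 0$), this gives continuity on all of $\mathbb{R}_{\ge 0}$. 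The conceptual content worth stressing in the write‑up is the compensation mechanism: as $N$ increases through $m$, the index set of the main sum gains the term $a_m$, but just below $m$ that same term is already present through the fractional contribution $(N-(m-1))\,a_m \to a_m$, so the jump of $\lfloor N \rfloor$ is exactly cancelled by the jump of the fractional‑part term.

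Finally, for the derivative discontinuity at $m \ge 1$: by the previous paragraph the left derivative equals $\delta\sqrt{2\pi}\,a_m$ and the right derivative equals $\delta\sqrt{2\pi}\,a_{m+1}$, so the derivative fails to exist at $m$ precisely when $a_m \neq a_{m+1}$. Here I would invoke the explicit coefficients $a_n = \bigl((1/2)^n + (-1)^n\bigr)/n$: for even $m$ one has $a_m = \bigl((1/2)^m + 1\bigr)/m > 0$ while $a_{m+1} = \bigl((1/2)^{m+1} - 1\bigr)/(m+1) < 0$, and for odd $m$ the two signs are reversed; in either case $a_m$ and $a_{m+1}$ have strictly opposite signs, hence $a_m \neq a_{m+1}$ for every $m \ge 1$, and the jump is genuine. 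The only step above that is more than bookkeeping is this sign check, and even it is immediate once one notes that the geometric factor $(1/2)^n$ never reaches $1$, so the sign of $a_n$ is governed by $(-1)^n$; accordingly I do not anticipate any real obstacle, and the proof is essentially a careful accounting of one‑sided limits.
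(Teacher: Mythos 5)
Your proposal is correct and follows essentially the same route as the paper's proof: write $I$ as an affine function of $N$ on each interval $[m, m+1)$, read off the constant derivative $\delta\sqrt{2\pi}\,a_{m+1}$ there, and compare the one-sided derivatives $\delta\sqrt{2\pi}\,a_m$ and $\delta\sqrt{2\pi}\,a_{m+1}$ at integer points. In fact you go slightly further than the paper, which asserts continuity at the break-points and the inequality $a_k \neq a_{k+1}$ without argument, whereas you verify both explicitly --- the telescoping identity $S(m-1) + a_m = S(m)$ for the matching of one-sided limits, and the sign alternation of $a_n$ (driven by $(-1)^n$ since $(1/2)^n < 1$) to guarantee a genuine derivative jump at every integer.
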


\begin{proof}
The map \( I(N) \) is composed of a finite sum of constant terms and a linear interpolation term involving the fractional part of \( N \).

Between two consecutive integers \( k \leq N < k+1 \), the map simplifies to
\[
I(N) = C_k + \delta \sqrt{2\pi} (N - k) a_{k+1},
\]
where \( C_k = \delta \sqrt{2\pi} \sum_{n=1}^{k} a_n \) is constant on the interval.

Thus, \( I(N) \) is affine (linear) on each open interval \( (k, k+1) \), hence continuously differentiable there, with derivative
\[
\frac{dI}{dN} = \delta \sqrt{2\pi} a_{k+1}.
\]

At integer points \( N = k \), the left and right derivatives are
\[
\left. \frac{dI}{dN} \right|_{N=k^-} = \delta \sqrt{2\pi} a_{k},
\quad
\left. \frac{dI}{dN} \right|_{N=k^+} = \delta \sqrt{2\pi} a_{k+1},
\]
which generally differ because \( a_k \neq a_{k+1} \).

Therefore, \( I(N) \) is continuous everywhere but has jump discontinuities in its derivative at integer points.
\end{proof}

\begin{corollary}
Given the continuous integral map
\[
I(N) = \delta \sqrt{2\pi} \left( \sum_{n=1}^{\lfloor N \rfloor} a_n + (N - \lfloor N \rfloor) a_{\lfloor N \rfloor + 1} \right),
\]
the inverse mapping \( N = \mathcal{I}^{-1}(I) \) can be locally approximated by piecewise-linear interpolation and extended to a globally continuous and piecewise-differentiable function on the range of \( I \).

\end{corollary}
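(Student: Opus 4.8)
The plan is to reduce the whole statement to the inversion of finitely many affine pieces, using the preceding Theorem as the workhorse. That theorem already tells us that on each open interval $(k,k+1)$ the map $I$ is affine with slope $\delta\sqrt{2\pi}\,a_{k+1}$, so the only thing needed to invert a single piece is that this slope be nonzero. First I would record that $a_n\neq 0$ for every $n\ge 1$: the equation $a_n=0$ would force $(1/2)^n = -(-1)^n = (-1)^{n+1}$, whose positivity requires $n$ odd and hence $(1/2)^n = 1$, i.e. $n=0$, which is excluded. Thus every affine piece has nonzero slope, so $I$ is strictly monotone on each $(k,k+1)$ and restricts to a homeomorphism onto the open interval between $I(k)$ and $I(k+1)$.

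Next I would produce the local inverse explicitly. On $[k,k+1]$ the Theorem gives $I(N)=I(k)+\delta\sqrt{2\pi}\,a_{k+1}\,(N-k)$, and solving for $N$ yields
\[
\mathcal{I}^{-1}(I)\big|_{[k,k+1]} = k + \frac{I-I(k)}{\delta\sqrt{2\pi}\,a_{k+1}},
\]
an affine (hence $C^\infty$) function of $I$ on the image interval, with slope $1/(\delta\sqrt{2\pi}\,a_{k+1})$. This settles the local claim in its strongest form: the inverse is not merely approximated but is exactly piecewise-linear on each branch, the word ``approximation'' becoming relevant only once the exact $I$ is replaced by its tabulated or spline surrogate of Section~6.

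To assemble a global object I would glue consecutive branches. Since the coefficients alternate in sign, the slopes $\delta\sqrt{2\pi}\,a_{k+1}$ alternate in sign as well, so $I$ is a zigzag whose turning points occur at every integer; the branches $(k-1,k)$ and $(k,k+1)$ share the value $I(k)$, and both local inverses return $N=k$ there. Matching these endpoint values shows that the assembled map is continuous, while the two one-sided reciprocal slopes $1/(\delta\sqrt{2\pi}\,a_k)$ and $1/(\delta\sqrt{2\pi}\,a_{k+1})$ differ, producing exactly the piecewise-differentiable behaviour with derivative jumps at the images $I(k)$ of integer points. Equivalently, one may reflect the graph of $I$ across the diagonal: the vertices $(k,I(k))$ map to $(I(k),k)$, and the resulting polyline is a continuous, piecewise-linear curve whose corners sit precisely at the turning values.

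The hard part will be the single-valuedness, which is the genuine obstacle hidden by the phrase ``on the range of $I$.'' Because $I$ oscillates, it is not injective, and distinct branches cover overlapping portions of $\operatorname{range}(I)$, so no honest function can be defined on all of the range at once. I would resolve this by fixing a branch-selection convention—restricting to a single monotone segment, or choosing the branch of least admissible index in the spirit of the minimal-crossing rule of Section~5—and then verifying that the selection varies continuously as the target value moves along a fixed monotone stretch, breaking only at the turning values $I(k)$ where, as shown above, the adjacent branches already agree and hence glue without a jump. The remaining work is bookkeeping: confirming that the chosen family of image intervals covers the intended recovery range and that the selection introduces no spurious discontinuities away from the integer images.
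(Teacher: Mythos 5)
Your proof follows the same route as the paper's own: restrict to each interval \([k,k+1]\), use the affine form \(I(N) = C_k + \delta\sqrt{2\pi}\,a_{k+1}(N-k)\) supplied by the preceding theorem, solve explicitly to get the local inverse \(N = k + (I-C_k)/(\delta\sqrt{2\pi}\,a_{k+1})\), and glue consecutive branches at the shared values \(I(k)\), with derivative jumps there because the reciprocal slopes \(1/(\delta\sqrt{2\pi}\,a_k)\) and \(1/(\delta\sqrt{2\pi}\,a_{k+1})\) differ. You strengthen that argument in two places. First, where the paper merely hedges that the slope vanishes only ``at isolated points where \(a_{k+1}=0\), which are rare,'' you prove the clean lemma that \(a_n \neq 0\) for every \(n \geq 1\): \(a_n = 0\) would force \((1/2)^n = (-1)^{n+1}\), whose positivity requires \(n\) odd, whence \((1/2)^n = 1\) and \(n = 0\), excluded. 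So every branch is genuinely invertible, with no exceptional intervals to excuse. Second, and more substantively, you correctly identify that the paper's final gluing step is not literally valid as written: since \(I\) oscillates, it is not injective, distinct branches cover overlapping portions of \(\operatorname{range}(I)\), and the diagonal reflection of the graph is a continuous polyline but not the graph of a single-valued function on the range of \(I\). The paper's proof passes over this silently; your branch-selection convention (a fixed monotone segment, or the minimal-index rule in the spirit of Section 5, continuous along each monotone stretch and matching at the turning values \(I(k)\) where adjacent local inverses already agree) is exactly the missing ingredient that turns the multivalued correspondence into an honest continuous, piecewise-differentiable function. In short, your proposal is not merely equivalent to the paper's proof --- it repairs it.
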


\begin{proof}
Since \( I(N) \) is continuous and piecewise affine with nonzero slopes on each interval \( (k, k+1) \), it is locally invertible on each such interval.

Specifically, within each interval, \( I(N) \) is a linear function:
\[
I(N) = C_k + \delta \sqrt{2\pi} (N - k) a_{k+1},
\]
with slope \( \delta \sqrt{2\pi} a_{k+1} \neq 0 \) (except possibly at isolated points where \( a_{k+1} = 0 \), which are rare under the coefficient construction).

Thus, the local inverse \( \mathcal{I}^{-1}(I) \) within each interval is given by:
\[
N = k + \frac{I - C_k}{\delta \sqrt{2\pi} a_{k+1}},
\]
which is a continuous and piecewise linear function of \( I \).

Gluing these local inverses together produces a global function \( \mathcal{I}^{-1}(I) \) that is continuous everywhere and piecewise differentiable, with possible derivative jumps at the images of integer points under \( I(N) \).
\end{proof}

\paragraph{Smooth Interpolation.}
To further improve the smoothness of the integral map, the discrete floor-based construction can be replaced by a smooth transition function \( \sigma(x) \) satisfying \( \sigma(x) \approx 0 \) for \( x \ll 0 \) and \( \sigma(x) \approx 1 \) for \( x \gg 0 \).
Standard choices include the smoothstep function, logistic sigmoid, or Gaussian cumulative distribution functions~\cite{fasshauer2007meshfree}.
By blending the contributions of neighboring bumps smoothly instead of abruptly switching, we can obtain a fully \( C^\infty \) continuous counter function and a globally smooth integral map \( I(N) \).
This modification eliminates derivative jumps at integer points and enables higher-order differentiability of the encoding.

\subsection{Toward Fully Smooth Integer Encoding}

The continuous extension of the integral map presented above achieves continuity and piecewise differentiability with respect to \( N \).  
However, it remains only piecewise-linear, exhibiting derivative jumps at integer points due to the use of the floor function \( \lfloor N \rfloor \) in its construction.

To obtain a truly \( C^\infty \)-smooth dependence on \( N \), and thus eliminate the picewise-linear artifacts visible in the integral map, we can further refine the counter function.  
Specifically, we replace the discrete floor-based construction with a smoothly interpolated sum:

\[
f_N(t) := \sum_{n=1}^{\infty} \sigma(n - N) a_n \exp\left( -\frac{(t-n)^2}{2\delta^2} \right),
\]
where \( \sigma(x) \) is a smooth transition function (e.g., a sigmoid or smoothstep) that interpolates continuously from \( 1 \) to \( 0 \) as \( x \) increases.

In this formulation:
\begin{itemize}
    \item Each term contributes gradually as \( N \) crosses its center \( n \),
    \item The overall function \( f_N(t) \) becomes infinitely differentiable with respect to \( N \),
    \item The integral map \( I(N) \) transitions from piecewise-linear to globally smooth (\( C^\infty \)) behavior.
\end{itemize}

Thus, the sharp "piloid" features observed in the piecewise-linear \( I(N) \) are replaced by a fully smooth oscillatory curve, more suitable for applications requiring higher-order differentiability.

\paragraph{Stability of the Inverse Mapping.}
When working with either the piecewise-linear or the smooth version, the stability of the inverse mapping \( \mathcal{I}^{-1}(I) \) relies on the magnitude of the local slope.  
The local inversion formula
\[
N = k + \frac{I - C_k}{\delta \sqrt{2\pi} a_{k+1}}
\]
assumes that \( a_{k+1} \) is nonzero and reasonably bounded away from zero.

If \( |a_{k+1}| \) becomes small, the mapping becomes highly sensitive to perturbations in \( I \), which may lead to instability in the recovery of \( N \).

To ensure robustness, it is advisable to introduce a stability condition:
\[
|a_{k+1}| > \varepsilon,
\]
for a small but fixed \(\varepsilon > 0\).  
Intervals where this condition fails can be handled by merging adjacent contributions or applying regularization techniques during inversion.

\subsection{Visualization of the Fully Smooth Integral Map}

To illustrate the behavior of the fully smooth extension of the integral map, we plot \( I(N) \) for real \( N \geq 0 \) using the smoothing-based construction described above.

The plot shows that the integral map \( I(N) \) becomes a globally smooth oscillatory function, eliminating the piecewise-linear artifacts of the discrete extension and enabling higher-order differentiability with respect to \( N \).

\begin{figure}[ht]
\centering
\includegraphics[width=0.7\textwidth]{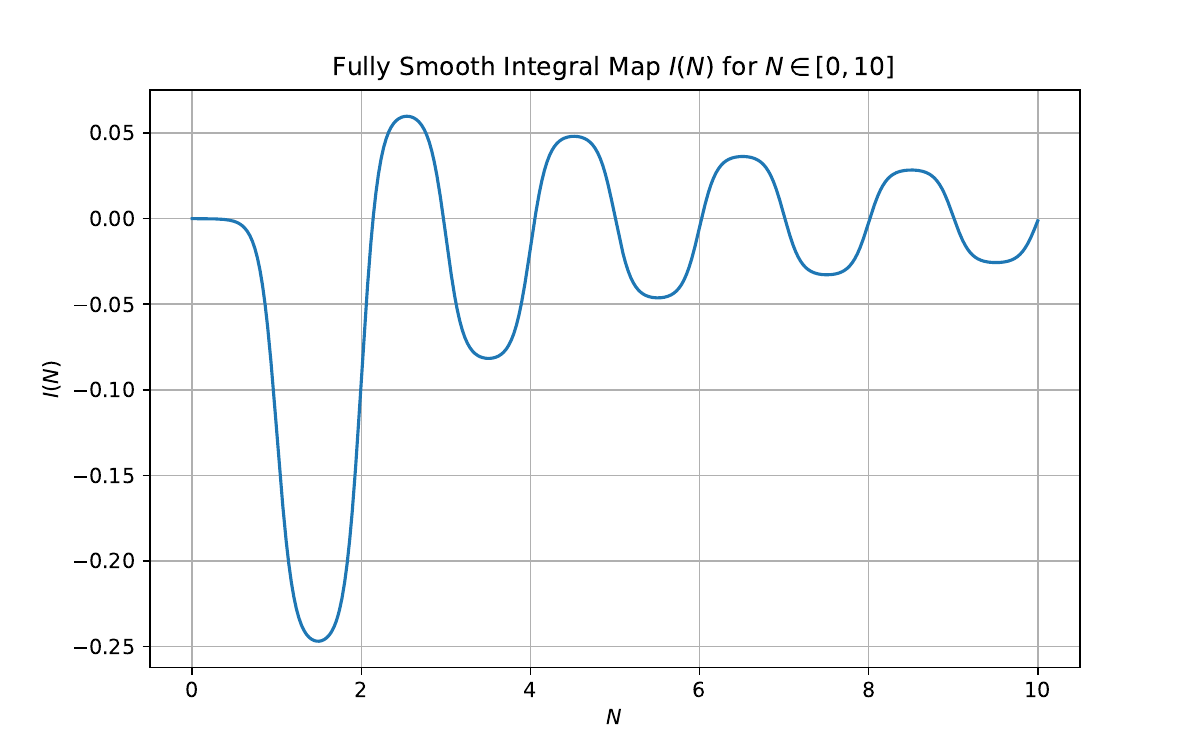}
\caption{Plot of the continuous extension of \( I(N) \) over real \( N \geq 0 \), exhibiting piecewise-linear oscillations converging to zero.}
\label{fig:continuous_i_plot}
\end{figure}

\subsection{Remarks}

The continuous extension of the integral map bridges the gap between discrete integer representations and smooth continuous frameworks.  
It allows the discrete structure of integers to emerge naturally from the behavior of a continuous and piecewise differentiable function, while maintaining compatibility with differentiable architectures and continuous optimization techniques.

\begin{figure}[ht]
\centering
\begin{tikzpicture}[
    node distance=1.8cm and 2.5cm,
    every node/.style={align=center},
    stage/.style={rectangle, draw, rounded corners, minimum width=3.5cm, minimum height=1.2cm, font=\small, thick},
    arrow/.style={-Latex, thick}
]

\node[stage] (discrete) {Discrete\\$I(N)$ at integers};
\node[stage, right=of discrete] (piecewise) {Piecewise-linear\\Interpolation};
\node[stage, right=of piecewise] (smooth) {Fully smooth\\Interpolation};

\draw[arrow] (discrete) -- (piecewise);
\draw[arrow] (piecewise) -- (smooth);

\node[below=0.3cm of discrete] {\small Stepwise jumps};
\node[below=0.3cm of piecewise] {\small Continuous but\\non-differentiable};
\node[below=0.3cm of smooth] {\small Continuous and\\smooth ($C^\infty$)};

\end{tikzpicture}
\caption{Transition from discrete values of \( I(N) \) to fully smooth interpolation.}
\label{fig:transition_I_N}
\end{figure}
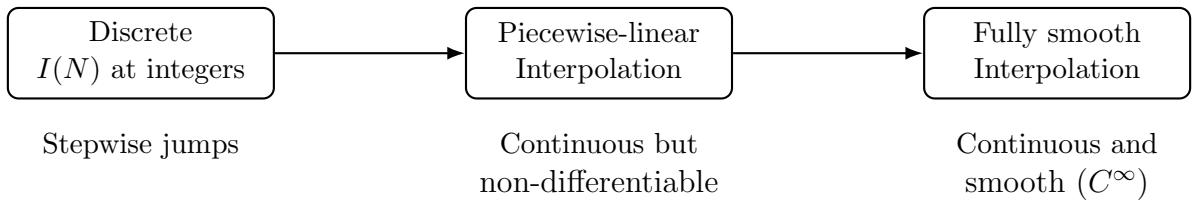

Initially, we extend \( I(N) \) by simple linear interpolation between integer points, producing a continuous but only piecewise-linear function.  
This approach ensures continuity but introduces kinks at integer values of \( N \), resulting in discontinuous derivatives.  
To achieve full smoothness, we further refine the construction by introducing a smooth transition mechanism, such as applying a sigmoid-like weighting to the bump contributions.  
In this fully smooth version, \( I(N) \) becomes infinitely differentiable, eliminating abrupt changes while preserving the essential discrete structure.

Although \( I(N) \) is no longer a stepwise discrete function, it retains the oscillatory character inherited from the original alternating-decaying series.  
The discrete information about integers is preserved structurally through the presence of identifiable features — specifically, near-zero crossings and local extrema — within the behavior of \( I(N) \).

These features serve as markers for the encoded integers: each zero crossing or extremum corresponds uniquely to a particular encoded integer value, enabling reliable recovery even within the continuous domain.

Thus, while smoothing the encoding process, the fundamental discrete structure remains embedded in the oscillatory dynamics of the integral map.

\section{Applications and Perspectives}

The proposed method of smooth integer encoding via integral balance naturally extends into a variety of practical and theoretical domains.  
It bridges discrete combinatorial structures and continuous differentiable computation, offering new tools for machine learning, optimization, and mathematical analysis.

\subsection{Practical Applications}

\paragraph{Differentiable Architectures.}
The smooth integral map enables construction of differentiable layers that perform implicit integer encoding:
\[
x \mapsto \text{SmoothInteger}(x) \mapsto I(x) \mapsto N(x).
\]
This supports:
\begin{itemize}
    \item Reversible and differentiable embeddings of discrete data~\cite{bengio2020discrete},
    \item Smooth index coding without gradient loss,
    \item Continuous relaxations of discrete selection mechanisms such as soft-argmax.
\end{itemize}

\paragraph{Optimization and Planning.}
Smooth integer representations allow:
\begin{itemize}
    \item Encoding discrete constraints within continuous loss functions,
    \item Applying gradient-based optimization methods to problems traditionally requiring integer programming,
    \item Controlling discrete parameters dynamically through continuous signals.
\end{itemize}

\paragraph{Soft Memory and Symbolic Systems.}
By using integral fingerprints:
\begin{itemize}
    \item Discrete memory slots can be represented in a smooth, differentiable way,
    \item Symbolic structures can be continuously embedded into analytical systems,
    \item Differentiable transitions between symbolic states become possible without abrupt switching.
\end{itemize}

\subsection{Continuous Control of Discrete Parameters}

In simulation and control tasks, where discrete decisions depend on smooth input signals, the inversion \( N = \mathcal{I}^{-1}(I) \) enables seamless modulation of integer quantities.  
For example, a smooth control signal can define a target \( I \), from which a near-integer \( N \) can be reconstructed dynamically.

\subsection{Theoretical Perspectives}

Beyond applications, the method offers theoretical insights into the embedding of discrete structures within continuous mathematical frameworks:

\begin{itemize}
    \item Studying how discrete integer behavior can emerge from oscillatory continuous processes,
    \item Designing new types of analytical transforms combining discrete and smooth features,
    \item Using the integral map \( I(N) \) as a model problem for analyzing convergence, stability, and approximation of discrete-to-continuous transitions,
    \item Building hybrid models in interdisciplinary contexts where discrete and continuous phenomena coexist.
\end{itemize}

\begin{center}
\emph{From discrete counts to smooth balances, and from oscillations to structure.}
\end{center}

\begin{figure}[ht]
\centering
\begin{tikzpicture}[
  level 1/.style={rectangle, draw=black, thick, fill=gray!30, rounded corners, minimum width=3.5cm, minimum height=1cm, text centered},
  level 2/.style={rectangle, draw=black, thin, fill=gray!20, rounded corners, minimum width=3cm, minimum height=0.8cm, text centered},
  connect/.style={thick, -{Latex[length=3mm,width=2mm]}},
  node distance=1.2cm and 0.5cm
]

\node[level 1] (main) {Smooth Integer Encoding};

\node[level 2, below left=of main, xshift=0.3cm] (arch) {Differentiable Architectures};
\node[level 2, below right=of main, xshift=-0.3cm] (mem) {Smooth Memory Systems};
\node[level 2, below=2.5cm of main, xshift=-3cm] (theo) {Theoretical Perspectives};
\node[level 2, below=2.5cm of main, xshift=3cm] (opt) {Optimization \& Planning};
\node[level 2, below=3.8cm of main] (sym) {Smooth Symbolic Systems};

\draw [connect] (main) -- (arch);
\draw [connect] (main) -- (opt);
\draw [connect] (main) -- (mem);
\draw [connect] (main) -- (theo);
\draw [connect] (main) -- (sym);
  
\end{tikzpicture}
\caption{Main application directions for smooth integer encoding.}
\label{fig:application-mindmap}
\end{figure}
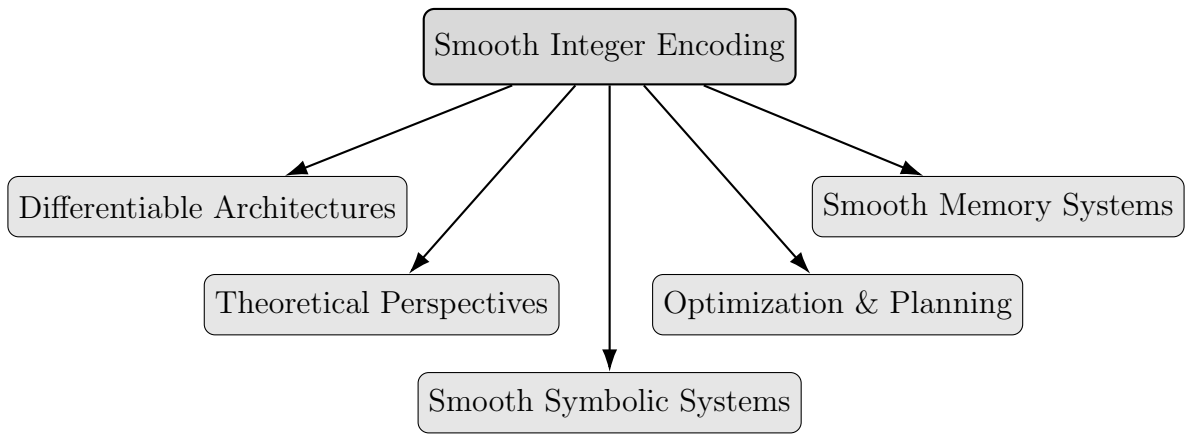

\section{Directions for Future Work}

While the present article focuses on a specific coefficient sequence \( a_n = \frac{(1/2)^n + (-1)^n}{n} \) for constructing the smooth integer encoding, several natural generalizations and extensions arise. We outline them briefly here as topics for future investigation.

\subsection{Generalized Families of Coefficients}

The choice of coefficients \( a_n \) plays a central role in determining the convergence properties, oscillatory behavior, and stability of the integral map \( I(N) \).

While the specific sequence \( a_n = \frac{(1/2)^n + (-1)^n}{n} \) achieves a balance between algebraic simplicity and numerical robustness, other classes of coefficients could be considered, such as:

\begin{itemize}
    \item \textbf{Exponential-polynomial mixes:}
    \[
    a_n = \frac{e^{-n} + (-1)^n}{n^p}, \quad p \geq 1,
    \]
    blending exponential decay with harmonic oscillations.
    \item \textbf{Trigonometric variants:}
    \[
    a_n = \frac{\cos(\pi n)}{n} e^{-n},
    \]
    introducing oscillatory behavior with controlled phase shifts.
\end{itemize}

More generally, one could study parametric families of the form
\[
a_n(\alpha, \beta, \gamma) = \frac{\alpha^n + (-1)^n \beta}{n^\gamma},
\]
with real parameters \(\alpha, \beta, \gamma\), allowing fine-tuned control over decay rate, oscillatory amplitude, and cancellation patterns.

Future work will investigate:
\begin{itemize}
    \item Conditions on \((\alpha, \beta, \gamma)\) ensuring convergence and near-vanishing of the total sum,
    \item The influence of these parameters on the smoothness and geometry of the integral map \( I(N) \),
    \item Classification of admissible parameter regions yielding stable encoding behavior.
\end{itemize}

\subsection{Multidimensional Extension}

Beyond the one-dimensional case, a natural generalization of the smooth integer encoding involves representing tuples of integers \( (N_1, N_2, \dots, N_d) \in \mathbb{N}^d \) via smooth multivariate functions.

\paragraph{General Form.}
The multivariate counter function is defined as
\[
f(\vec{t}) = \sum_{\vec{n} \in \mathbb{N}^d,\, \vec{n} \leq \vec{N}} a_{\vec{n}} \exp\left(-\frac{\|\vec{t} - \vec{n}\|^2}{2\delta^2}\right),
\]
where \( \vec{t} = (t_1, \dots, t_d) \in \mathbb{R}^d \) and the coefficients \( a_{\vec{n}} \) encode the discrete structure.

The corresponding integral map takes the form
\[
I(\vec{N}) := \int_{\mathbb{R}^d} f(\vec{t})\, d\vec{t} \approx (2\pi)^{d/2} \delta^d \sum_{\vec{n} \leq \vec{N}} a_{\vec{n}},
\]
preserving the cumulative cancellation behavior observed in the one-dimensional case, now extended to multi-index sums.

\paragraph{Choice of Multivariate Coefficients.}
Possible constructions for \( a_{\vec{n}} \) include separable products of univariate coefficients:
\[
a_{\vec{n}} = a_{n_1} a_{n_2} \cdots a_{n_d},
\]
which allow independent control along each coordinate axis, or more general coupled structures that encode joint dependencies between dimensions.

Key considerations for future exploration include:
\begin{itemize}
    \item Selection and structure of \( a_{\vec{n}} \) ensuring convergence, localization, and stability,
    \item Analysis of how the resolution and recovery behavior scale with the ambient dimension \( d \),
    \item Investigation of coordinate-wise versus joint recovery strategies for decoding \( \vec{N} \).
\end{itemize}

\paragraph{Recovery of the Integer Vector.}
Recovery of the encoded vector \( \vec{N} \) can be performed by identifying the minimal multi-index satisfying a near-cancellation threshold:
\[
\vec{N} := \min \left\{ \vec{k} \in \mathbb{N}^d \mid |I(\vec{k})| < \varepsilon \right\},
\]
where the minimum can be understood in lexicographic or coordinate-wise orderings, depending on the application.

If the coefficients \( a_{\vec{n}} \) are separable, coordinate-wise recovery strategies become feasible, enabling independent estimation of each component based on marginal contributions.

\paragraph{Visualization.}
Multidimensional visualization techniques, such as heatmaps for \( d=2 \) or surface plots, can aid in understanding the geometry of the integral map \( I(N_1, N_2) \) and the localization of zero-crossing regions corresponding to the encoded tuples.

These extensions open new paths for smooth encoding and recovery of complex discrete structures within continuous mathematical frameworks.

\section*{Conclusion}

We have introduced a novel framework for encoding integers via the integral properties of smooth real-valued functions.  
Unlike traditional approaches that rely on explicit symbolic representations or quantization, our method encodes discrete information implicitly, allowing integers to emerge as balance points in the behavior of a continuous or piecewise-smooth integral map.

Starting from a discrete construction based on localized Gaussian bumps with carefully designed alternating-decaying coefficients, we extended the method to continuous and fully smooth settings.  
This progression enables differentiable recovery mechanisms, bridging the gap between discrete mathematical structures and continuous computational frameworks.

The resulting integral maps exhibit structured oscillations converging to zero, providing a stable and interpretable mechanism for integer recovery.  
We have analyzed the theoretical properties of the construction, proposed numerical strategies for inversion, and discussed possible extensions to generalized coefficient families and multidimensional encoding.

Future work will explore richer classes of coefficient structures, smooth parameterizations of encoding families, and applications to multivariate discrete data encoding within continuous optimization and learning architectures.

Overall, the proposed method offers a new perspective on integrating discrete symbolic logic into differentiable and smooth analytical systems.

\begin{center}
\emph{Oscillation decays, but structure persists.}
\end{center}


\newpage
\appendix

\section{Code for Plotting the Smooth Counter Function}

The following Python code snippet generates the plot of the smooth counter function \( f_N(t) \) for \( N=5 \) and \( \delta=0.2 \), as shown in Figure~\ref{fig:fn_plot}.

\begin{verbatim}
import numpy as np
import matplotlib.pyplot as plt

# Parameters
delta = 0.2
N = 5
t = np.linspace(0, N + 3, 1000)  # Extra margin for better visualization

# Coefficient sequence a_n
def a_n(n):
    return (0.5)**n / n + (-1)**n / n

# Smooth counter function f_N(t)
def f_N(t, N, delta):
    result = np.zeros_like(t)
    for n in range(1, N + 1):
        result += a_n(n) * np.exp(-(t - n)**2 / (2 * delta**2))
    return result

# Evaluate f_N(t)
f_values = f_N(t, N, delta)

# Plot f_N(t)
plt.figure(figsize=(8,5))
plt.plot(t, f_values)
plt.title(r'Smooth Counter Function $f_N(t)$ for $N=5$, $\delta=0.2$')
plt.xlabel('t')
plt.ylabel('f_N(t)')
plt.grid(True)
plt.savefig('fn_plot_example.png', dpi=300)  # Save the plot
plt.show()
\end{verbatim}

This code constructs the smooth counter function \( f_N(t) \) by summing five localized Gaussian bumps with alternating-decaying coefficients and plots it over a suitable domain.

\section{Code for Generating the Integral Map Plot}

The following Python code snippet generates the plot of the integral map \( I(N) \) versus \( N \), illustrating the oscillatory decay toward zero, as shown in Figure~\ref{fig:i_plot}.

\begin{verbatim}
import numpy as np
import matplotlib.pyplot as plt

# Parameters
delta = 0.2
N_max = 20
t = np.linspace(0, N_max + 2, 1000)

# Coefficient sequence a_n
def a_n(n):
    return (0.5)**n / n + (-1)**n / n

# Smooth counter function f_N(t)
def f_N(t, N, delta):
    result = np.zeros_like(t)
    for n in range(1, N + 1):
        result += a_n(n) * np.exp(-(t - n)**2 / (2 * delta**2))
    return result

# Compute I(N) for N = 1 to N_max
I_N = []
for N in range(1, N_max + 1):
    f_t = f_N(t, N, delta)
    integral = np.trapz(f_t, t)
    I_N.append(integral)

# Plot I(N)
plt.figure(figsize=(8,5))
plt.plot(range(1, N_max + 1), I_N, marker='o')
plt.title('Integral Map I(N) vs N')
plt.xlabel('N')
plt.ylabel('I(N)')
plt.grid(True)
plt.savefig('i_plot_example.png', dpi=300)  # Save the plot
plt.show()
\end{verbatim}

This code constructs the function \( f_N(t) \) by summing smooth Gaussian bumps with alternating-decaying coefficients, computes the cumulative integral \( I(N) \), and plots its oscillatory behavior across \( N \).

\section{Code for Plotting Partial Sums of Coefficients}

The following Python code snippet generates the plot of partial sums
\[
S(N) := \sum_{n=1}^{N} a_n,
\]
up to \( N=20 \), illustrating structured oscillations around zero, as shown in Figure~\ref{fig:s_partial_sum}.

\begin{verbatim}
import numpy as np
import matplotlib.pyplot as plt

# Parameters
N_max = 20

# Coefficient sequence a_n
def a_n(n):
    return (0.5)**n / n + (-1)**n / n

# Compute partial sums S(N)
S_N = []
current_sum = 0
for n in range(1, N_max + 1):
    current_sum += a_n(n)
    S_N.append(current_sum)

# Plot S(N)
plt.figure(figsize=(8,5))
plt.plot(range(1, N_max + 1), S_N, marker='o')
plt.title(r'Partial Sums $S(N) = \sum_{n=1}^N a_n$ up to $N=20$')
plt.xlabel(r'$N$')
plt.ylabel(r'$S(N)$')
plt.grid(True)
plt.axhline(0, color='gray', linestyle='--')  # Zero line for reference
plt.savefig('partial_sum_plot.png', dpi=300)  # Save the plot
plt.show()
\end{verbatim}

This code computes and visualizes the partial sums of the alternating-decaying sequence \( a_n \), confirming the controlled oscillatory behavior and convergence toward zero.

\section{Code for Fully Smooth Integral Map Visualization}

The following Python code snippet generates the fully smooth version of the integral map \( I(N) \), based on the smoothing approach.

The code uses a smooth transition function \(\sigma(x)\) (a sigmoid) to interpolate the contributions of the Gaussian bumps and produces a globally \( C^\infty \) integral map suitable for differentiable applications.

\begin{verbatim}
import numpy as np
import matplotlib.pyplot as plt

# Parameters
delta = 0.2
N_max = 10
N_fine = np.linspace(0, N_max, 1000)  # Fine grid for smooth plot

# Coefficient sequence a_n
def a_n(n):
    return (0.5)**n / n + (-1)**n / n

# Smooth transition function sigma
def sigma(x, sharpness=10):
    return 1 / (1 + np.exp(sharpness * x))  # Sigmoid transition

# Fully smooth I(N)
def I_smooth(N_array, delta, sharpness=10):
    I_vals = np.zeros_like(N_array)
    n_max = int(N_max) + 10  # Extra terms for convergence
    n_values = np.arange(1, n_max + 1)
    a_values = np.array([a_n(n) for n in n_values])
    for i, N in enumerate(N_array):
        sigma_values = sigma(n_values - N, sharpness=sharpness)
        I_vals[i] = (
            delta * np.sqrt(2 * np.pi)
            * np.sum(sigma_values * a_values)
        )
    return I_vals

# Compute the smooth integral map
I_vals_smooth = I_smooth(N_fine, delta)

# Plotting
plt.figure(figsize=(8,5))
plt.plot(N_fine, I_vals_smooth)
plt.title(r'Fully Smooth Integral Map $I(N)$ for $N\in[0,10]$')
plt.xlabel(r'$N$')
plt.ylabel(r'$I(N)$')
plt.grid(True)
plt.savefig('fully_smooth_i_plot_example.png', dpi=300)
plt.show()
\end{verbatim}

The parameter \texttt{sharpness} controls how rapidly the contributions transition from active to inactive around each bump center.  
Higher values of \texttt{sharpness} approach the behavior of the piecewise-linear model, while moderate values produce fully smooth oscillations with soft transitions between integer contributions.

\section{Code for Table-Based Integer Recovery}

The following Python code snippet generates the discrete integral table \( (N, I(N)) \) to perform integer recovery.

\subsection{Generation of the Integral Table}

The following code computes \( I(N) \) for integers \( N = 1, \dotsc, N_{\text{max}} \), using the smooth counter construction:

\begin{verbatim}
import numpy as np

# Parameters
delta = 0.2
N_max = 30

# Coefficient sequence a_n
def a_n(n):
    return (0.5)**n / n + (-1)**n / n

# Precompute I(N)
N_values = np.arange(1, N_max + 1)
I_values = np.array([
    np.sqrt(2 * np.pi) * delta * sum(a_n(k) for k in range(1, n + 1))
    for n in N_values
])

# Table (N, I(N))
table = list(zip(N_values, I_values))
\end{verbatim}

This generates a list \texttt{table} containing pairs \((N, I(N))\), which can be used for numerical recovery.

\subsection{Recovery Procedure Using the Table}

Given an observed integral \( I^* \), the recovery procedure searches for the smallest \( N \) such that \( |I(N) - I^*| \) falls below a specified threshold:

\begin{verbatim}
# Observed integral
I_star = 0.028
epsilon = 0.005

# Find the minimal N such that |I(N) - I_star| < epsilon
recovered_N = None
for N, I_N in table:
    if abs(I_N - I_star) < epsilon:
        recovered_N = N
        break

print(f"Recovered N: {recovered_N}")
\end{verbatim}

In the example shown, the search identifies \( N = 8 \) as the integer corresponding to the given integral value \( I^* \approx 0.028 \).

\subsection{Remarks}

\begin{itemize}
    \item For more precise recovery or continuous interpolation, spline fitting over \((N, I(N))\) can be applied.
    \item For larger \( N \), binary search over sorted \((N, I(N))\) values improves efficiency.
\end{itemize}

\end{document}